\title[Simplicial complements]{Homology groups of simplicial complements: A new proof of Hochster theorem}
\author[J.~Ma, F.~Fan \& X.~Wang]{Jun Ma, Feifei Fan and Xiangjun Wang}
\address[J.~Ma]{School of Mathematics and Statistics, Fudan Univ., P. R. China}
\email{12110180065@fudan.edu.cn}
\address[F.~Fan]{School of Mathematics and Statistics, Nankai Univ., P. R. China}
\email{fanfeifei@mail.nankai.edu.cn }
\address[X.~Wang]{School of Mathematics and Statistics, Nankai Univ., P. R. China}
\email{xjwang@nankai.edu.cn }
\subjclass[2010]{Primary 13F55, 18G15; Secondary 16E05, 55U10.}
\thanks{The authors are supported by NSFC grant No. 11261062, NSFC grant No. 11371093 and SRFDP No.20120031110025.}
\def\w{\widetilde}
\theoremstyle{plain}
\newtheorem{theorem}{Theorem}[section]
\newtheorem{corollary}[theorem]{Corollary}
\newtheorem{lemma}[theorem]{Lemma}
\newtheorem{proposition}[theorem]{Proposition}
\theoremstyle{definition}
\newtheorem{definition}[theorem]{Definition}
\newtheorem{remark}[theorem]{Remark}
\begin{document}
\newcommand{\FF}{\mathcal{F}}
\newcommand{\cp}{\mathbb{CP}}
\newcommand{\zz}{\mathbb{Z}}
\newcommand{\nn}{\mathbb{N}}
\newcommand{\ext}{{\rm Ext}}
\newcommand{\PP}{\mathbb{P}}
\newcommand{\QQ}{\mathbb{Q}}
\newcommand{\huaa}{\mathscr{A}}
\newcommand{\kk}{\mathbf{k}}
\newcommand{\ba}{\mathbf{a}}
\newcommand{\xx}{\mathbf{x}}
\newcommand{\uu}{\mathbf{u}}
\newcommand{\UU}{\mathbf{U}}
\newcommand{\vv}{\mathbf{v}}
\newcommand{\ZZ}{\mathcal{Z}}
\newcommand{\II}{\mathcal{I}}
\newcommand{\NN}{\mathcal{N}}
\newcommand{\MM}{\mathcal{M}}
\newcommand{\CC}{\mathbb{C}}

\begin{abstract}
In this paper, we consider homology groups induced by the exterior algebra generated by a simplicial compliment of a simplicial complex $K$. These homology groups are isomorphic to the Tor-groups $\mathrm{Tor}_{i, J}^{\kk[m]}(\kk(K),\kk)$ of the face ring $\kk(K)$, which is very useful and much studied in toric topology. By using \v Cech homology theory and Alexander duality theorem, we prove that
these homology groups have dualities with the simplicial cohomology groups of the full subcomplexes of $K$.  Then we give a new proof of Hochster's theorem.
\end{abstract}
\maketitle

\section{Introduction}
Throughout this paper, $\kk$ is a field or the ring of integers $\mathbb{Z}$. $\kk[m]=\kk[v_1,\dots,v_m]$ is the graded polynomial algebra on $m$ variables,
deg$(v_i)=2$. The \emph{face ring} (also known as the \emph{Stanley-Reisner ring}) of a simplicial complex $K$ with $m$ vertices is the quotient ring
\[\kk(K)=\kk[m]/\mathcal {I}_K\]
where $\mathcal {I}_K$ is the ideal generated by those square free monomials $v_{i_1}\cdots v_{i_s}$
for which $\{i_1,\dots,i_s\}$ is not a simplex in $K$.

For any simple polytope $P^n$, Davis and Januszkiewicz \cite{DJ91} introduced a $T^m$-manifold  $\mathcal{Z}_P$ with orbit space $P^n$. After that Buchstaber and Panov \cite{BP00} generalized this definition to any simplicial complex $K$ with vertex set $[m]=\{1,2,\dots,m\}$, and named it the \emph{moment-angle complex} associated to $K$:
\[\ZZ_K=\bigcup\limits_{\sigma\in K} D(\sigma),\]
where $D(\sigma)= Y_1{\times} Y_2 {\times} \cdots {\times} Y_m $, $Y_i=D^2$ if $i\in\sigma$ and $Y_i=S^1$ if $i\not\in\sigma$.
The following theorem is proved by Buchstaber and Panov \cite{BP00} for the case over a field by using Eilenberg-Moore Spectral Sequence, \cite{BBP04} for the general case.
\begin{theorem}[Buchstaber and Panov {\cite[Theorem 4.7]{P08}}]

Let K be a simplicial complex with $m$ vertices. Then the following isomorphism of algebras holds:
$$H^*(\ZZ_{K};\kk)\cong\mathrm{Tor}^{\kk[m]}(\kk(K),\kk).$$
\end{theorem}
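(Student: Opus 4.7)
The plan is to identify $H^*(\ZZ_K;\kk)$ with $\mathrm{Tor}^{\kk[m]}(\kk(K),\kk)$ via the Borel construction together with the Eilenberg--Moore spectral sequence. The starting observation is that $\ZZ_K$ is (up to homotopy) the homotopy fiber of an inclusion $DJ(K)\hookrightarrow BT^m$, where $DJ(K):=ET^m\times_{T^m}\ZZ_K$ may be identified with the subcomplex
\[
DJ(K)=\bigcup_{\sigma\in K} BT^\sigma \subset BT^m=(\cp^\infty)^m.
\]
I would first compute $H^*(DJ(K);\kk)$ by a Mayer--Vietoris induction over the faces of $K$: using that each restriction $H^*(BT^m)\to H^*(BT^\sigma)$ kills precisely the variables indexed by $[m]\setminus\sigma$, one obtains $H^*(DJ(K);\kk)\cong \kk(K)$, with the pullback $\kk[m]=H^*(BT^m)\to H^*(DJ(K))$ identified with the canonical quotient $\kk[m]\twoheadrightarrow\kk(K)$.

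The second step is to apply the Eilenberg--Moore spectral sequence for the homotopy pullback square
\[
\begin{array}{ccc}
\ZZ_K & \longrightarrow & ET^m \\
\downarrow & & \downarrow \\
DJ(K) & \longrightarrow & BT^m.
\end{array}
\]
Simple connectivity of $BT^m$ and the fact that $H^*(BT^m;\kk)=\kk[m]$ is a polynomial algebra of finite type in each degree give convergence, and the $E_2$ page reads
\[
E_2=\mathrm{Tor}^{H^*(BT^m)}\bigl(H^*(DJ(K)),\kk\bigr)=\mathrm{Tor}^{\kk[m]}(\kk(K),\kk).
\]

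The principal obstacle is collapse at $E_2$. Here I would exploit the Koszul resolution $\Lambda[u_1,\dots,u_m]\otimes\kk[m]$ of $\kk$ over $\kk[m]$ with $du_i=v_i$: computing the Tor against this resolution endows $E_2$ with a second (internal) grading $\deg u_i=(-1,2)$, $\deg v_i=(0,2)$, in which every class sits on a fixed diagonal, so that all differentials $d_r$ for $r\geq 2$ vanish on bidegree grounds. Collapse yields the additive isomorphism; the multiplicative statement follows because the EMSS is multiplicative and the bigrading prevents multiplicative extension problems. For the integral version one cannot appeal to collapse of a spectral sequence over $\zz$; instead, as in \cite{BBP04}, I would construct a direct chain-level quasi-isomorphism between the Koszul-type DGA $\Lambda[u_1,\dots,u_m]\otimes\kk(K)$ (with $du_i=v_i$) and the cellular cochain complex of $\ZZ_K$ coming from the product CW structure with cells $\{1,T,D\}$ on each $D^2$-factor and $\{1,T\}$ on each $S^1$-factor. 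Verifying that this map is multiplicative and realizes the Koszul quasi-isomorphism cell by cell is the technical heart of the argument.
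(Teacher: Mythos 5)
First, a remark on the comparison itself: the paper offers no proof of this statement. It is quoted as background, with the proof explicitly attributed to \cite{BP00} (over a field, via the Eilenberg--Moore spectral sequence) and to \cite{BBP04} (in general, via cellular cochain algebras); the paper's own contribution lies elsewhere (a new proof of Hochster's theorem via simplicial complements, nerves and Alexander duality, which never touches $\ZZ_K$ topologically). Your sketch reproduces exactly the strategy of the cited sources: the Borel construction $DJ(K)=ET^m\times_{T^m}\ZZ_K\simeq\bigcup_{\sigma\in K}BT^\sigma$, the identification $H^*(DJ(K);\kk)\cong\kk(K)$, the EMSS of the pullback square with $E_2=\mathrm{Tor}^{\kk[m]}(\kk(K),\kk)$, and the cellular model of \cite{BBP04} for the integral and multiplicative statements. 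So the architecture is the right one and matches the literature the paper points to.

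There is, however, one step that fails as written: the collapse argument. The bigrading with $\deg v_i=(0,2)$ and $\deg u_i=(-1,2)$ does not place $E_2$ ``on a fixed diagonal.'' The groups $\mathrm{Tor}^{-s,t}_{\kk[m]}(\kk(K),\kk)$ occupy many bidegrees with varying $t-s$ (by Hochster's formula they are $\w{H}^{|J|-s-1}(K_J)$ for all pairs $(s,J)$ with $t=2|J|$), and while the evenness of all internal degrees does force $d_r=0$ for $r$ even, a differential such as $d_3\colon E_3^{-s,t}\to E_3^{-s+3,\,t+2}$ connects two even-$t$ positions and is not excluded on $(s,t)$-bidegree grounds; one can easily write down complexes $K$ for which both source and target are nonzero. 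The argument that actually works is the finer $\zz^m$-multigrading: $\mathrm{Tor}^{-s,2\ba}$ vanishes unless $\ba$ is squarefree, and within a fixed multidegree $2J$ the internal degree is the constant $2|J|$, whereas $d_r$ raises it by $r-1>0$. But to invoke this you must show that the topological EMSS genuinely splits along the $\zz^m$-multigrading --- this requires the coordinatewise structure of $(\cp^\infty)^m$ and the polyhedral-product structure of $DJ(K)$, and is an extra input, not a formality. The same issue undercuts your claim that the bigrading ``prevents multiplicative extension problems.'' The clean fix is the one you already name as your second route: promote the chain-level quasi-isomorphism from $\Lambda[u_1,\dots,u_m]\otimes\kk(K)$ to the cellular cochains of $\ZZ_K$ (as in \cite{BBP04} and \cite{BP13}) to the main argument; it settles the field case, the integral case, and the ring structure simultaneously, and is where the real work lies.
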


Since $\mathrm{Tor}^{\kk[m]}(\kk(K),\kk)$ has a natural $\zz\oplus\zz^m$-bigrade.
So the bigraded cohomology ring can be decomposed as follows:
$$H^*(\ZZ_{K};\kk)\cong \mathrm{Tor}^{\kk[m]}(\kk(K),\kk)=\bigoplus\limits_{i\geq0}\bigoplus\limits_{J\subseteq[m]}
\mathrm{Tor}_{i,J}^{\kk[m]}(\kk(K),\kk).$$

Hochster \cite{H75} gave a combinatorial description of the Tor-groups $\mathrm{Tor}_{i, J}^{\kk[m]}(\kk(K),\kk)$.
\begin{theorem}[Hochster \cite{H75}]
$$\mathrm{Tor}_{i,\,J}^{\kk[m]}(\kk(K),\kk)\cong\w{H}^{ |J|-i-1}(K_J;\kk),$$
where $\w{H}^{-1}({\varnothing};\kk)=\kk$, $K_J$ is the full subcomplex of $K$ corresponding to $J$.
\end{theorem}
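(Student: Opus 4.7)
I would deduce Hochster's theorem by computing $\mathrm{Tor}^{\kk[m]}(\kk(K),\kk)$ through the Koszul resolution of $\kk$ over $\kk[m]$ and tracking the multidegree-$J$ strand explicitly. The key observation is that this strand, after reindexing, is (up to a dimension shift) the augmented simplicial cochain complex of $K_J$; taking cohomology then yields $\w H^{|J|-i-1}(K_J;\kk)$ in homological degree $i$.

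First, I would take the standard Koszul resolution $\kk[m]\otimes\Lambda[u_1,\dots,u_m]$ of $\kk$ with differential $d(u_i)=v_i$, and tensor with $\kk(K)$ to obtain the complex $\kk(K)\otimes\Lambda[u_1,\dots,u_m]$ computing $\mathrm{Tor}^{\kk[m]}(\kk(K),\kk)$. This complex is $\zz^m$-multigraded with $\deg u_i=\deg v_i=e_i$. For each $J\subseteq[m]$, the multidegree-$J$ subcomplex has $\kk$-basis $\{u_I v_\tau : I\sqcup\tau=J,\ \tau\in K_J\}$, with $u_Iv_\tau$ sitting in homological degree $|I|=|J|-|\tau|$.

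Second, I would reparametrize by $\tau\in K_J$: the basis element $u_{J\setminus\tau}v_\tau$ corresponds to the simplex $\tau$, of cohomological dimension $|\tau|-1$. A direct sign computation identifies the Koszul differential
$d(u_{J\setminus\tau}v_\tau)=\sum_{i\in J\setminus\tau,\ \tau\cup i\in K_J}\pm\, u_{J\setminus(\tau\cup i)}v_{\tau\cup i}$
with the simplicial coboundary $\delta\tau$ on the augmented cochains of $K_J$. Hence the multidegree-$J$ Koszul complex is isomorphic, up to a uniform degree shift of $|J|-1$, to $\w C^\bullet(K_J;\kk)$, giving
\[
\mathrm{Tor}_{i,J}^{\kk[m]}(\kk(K),\kk)\cong\w H^{|J|-i-1}(K_J;\kk).
\]
The case $J=\varnothing$ reduces to the one-dimensional complex $\kk\cdot 1$ in multidegree $0$, matching the convention $\w H^{-1}(\varnothing;\kk)=\kk$.

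Alternatively, following the route indicated by the abstract, I would read the multidegree-$J$ Koszul complex as a \v Cech-type chain complex on the simplicial complement of $K_J$ inside $\partial\Delta^J\cong S^{|J|-2}$ and then apply Alexander duality on this sphere to land on the same $\w H^{|J|-i-1}(K_J;\kk)$. The principal difficulty in either route is not conceptual but bookkeeping: reconciling the antisymmetric Koszul signs on $\Lambda[u]$ with the simplicial coboundary signs, and combining the dimension shift from the reparametrization $\tau=J\setminus I$ with (in the geometric variant) the additional $-1$ from Alexander duality on an $(|J|-2)$-sphere.
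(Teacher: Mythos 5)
Your primary argument is correct, but it is not the route this paper takes: it is essentially the Koszul-resolution proof that the paper itself attributes to Buchstaber and Panov \cite[Theorem 3.2.9]{BP13} and explicitly sets aside in favor of a new one. Identifying the multidegree-$J$ strand of $\kk(K)\otimes\Lambda[u_1,\dots,u_m]$ with the augmented cochain complex of $K_J$ (basis $u_{J\setminus\tau}v_\tau\leftrightarrow\tau$, Koszul differential $\leftrightarrow$ coboundary, homological degree $i$ $\leftrightarrow$ cohomological dimension $|J|-i-1$) is a complete and self-contained proof once the sign bookkeeping is done, and it handles $\kk=\zz$ with no extra effort. The paper instead starts from the Wang--Zheng/Taylor-resolution description $\mathrm{Tor}_{i,J}^{\kk[m]}(\kk(K),\kk)\cong H_i\big(\Lambda^{*,J}[\mathbb{P}],d\big)$ for a simplicial complement $\mathbb{P}$ of $K$ (Theorem \ref{thm:1}), then realizes $\NN$ of the open cover of $U(K)=|\partial\Delta^{m-1}|\setminus|K|$ by the open stars $U_i=\mathrm{int}|\mathrm{star}_{\partial\Delta^{m-1}}\tau_i|$, relates $H_*\big(\Lambda^{*,[m]}[\mathbb{P}],d\big)$ to $\w H_{*-2}(\NN(\UU))$ via a short exact sequence of chain complexes, and finishes with the nerve theorem and Alexander duality on $S^{m-2}$. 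Your sketched ``alternative route'' is in this spirit but still differs: the paper's complex $\Lambda^*[\mathbb{P}]$ is built from an arbitrary simplicial complement (Taylor-type), not from the Koszul complex. What your approach buys is brevity and directness; what the paper's buys is a geometric interpretation of the Tor-groups as \v Cech/Alexander-dual homology of the complement of $|K|$ in the boundary sphere, valid uniformly for every choice of simplicial complement $\mathbb{P}$. Two small wording points if you keep your version: the relation $p=|J|-i-1$ is a reflection of the grading rather than a ``uniform degree shift,'' and you should state explicitly that the multidegree-$J$ component of $\kk(K)$ is spanned by the squarefree monomials $v_\tau$ with $\tau\in K_J$, which is where the face ring's defining ideal enters.
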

Hochster proved this for the case $\kk$ is a field by analyzing the betti number of both sides. Buchstaber and Panov \cite[Theorem 3.2.9]{BP13} generalize this result to $\kk=\mathbb{Z}$ by means of the Koszul resolution $\Lambda[m]\otimes\kk[m]$ of $\kk$.

Recently, Wang and Zheng \cite{WZ13} gave another way to calculate $\mathrm{Tor}^{\kk[m]}(\kk(K),\kk)$ by using Taylor resolution on the Stanley-Reisner ring $\kk(K)$. This method was presented firstly by Yuzvinsky in \cite{Y99}. In this paper we give a combinatorial description of this method, and then we prove Hochster's theorem in a different way.
\begin{definition}[missing faces and simplicial complements]

Let $K$ be a simplicial complex on the set $[m]$. A \emph{missing face} of $K$ is the subset $\tau \subseteq [m]$ where $\tau \notin K$ and every proper subset of $\tau$ is a simplex of $K$. Denote by $MF(K)$ the missing face set of $K$. Clearly $MF(K)$ is uniquely determined by $K$.

A \emph{simplicial complement} $\mathbb{P}$ of $K$ is a subset of $\{\tau\subseteq [m]\mid\tau\not\in K\}$ so that $MF(K)\subseteq \mathbb{P}$. Usually $\mathbb{P}$ is not unique. Denote by $P(K)$ the set of simplicial complements of $K$.
\end{definition}

Given a simplicial complement $\mathbb{P}$ of $K$, one can define an exterior algebra $\Lambda^*[\mathbb{P}]$ generated by all elements of $\mathbb{P}$.
For a monomial $\uu=\tau_{i_1}\tau_{i_2}\cdots\tau_{i_n}\in \Lambda^*[\mathbb{P}]$,
let \[S_{\uu}=\tau_{i_1}\cup\tau_{i_2}\cup\cdots\cup\tau_{i_n}\] be the \emph{total set} of $\uu$.
So $\Lambda^*[\mathbb{P}]$ has a natural $\zz\oplus\zz^m$-bigrade, which means
$$\Lambda^*[\mathbb{P}]=\bigoplus\limits_{i\geq 0}\bigoplus\limits_{J\subseteq[m]}\Lambda^{i, J}[\mathbb{P}]$$
where $\Lambda^{i, J}[\mathbb{P}]$ is generated by monomials $\uu$  satisfying $S_{\uu}=J$ and deg$(\uu)=i$.

One can make $\Lambda^*[\mathbb{P}]$ into a chain complex by defining a differential $d$ on it. The differential $d: \Lambda^{n,*}[\mathbb{P}]\to \Lambda^{n-1,*}[\mathbb{P}]$ is generated by
$$d(\uu)=\sum_{j=1}^n(-1)^{j+1}\partial_j\uu\cdot\delta_j,$$
where $\uu=\tau_{i_1}\tau_{i_2}\cdots\tau_{i_n}$, $\partial_j\uu=\tau_{i_1}\cdots{\widehat{\tau}_{i_j}}\cdots\tau_{i_n}$ (particularly, if $n=1$, $\partial_1(\tau_{i_1})=1$), $\delta_j=1$ if $S_{\uu}=S_{\partial_i(\uu)}$ and zero otherwise.

\begin{theorem}[see {\cite[Theorem 2.6 and Theorem 3.2]{WZ13}}]\label{thm:1}
Let $K$ be a simplicial complex on the set $[m]$. Given a simplicial complement $\mathbb{P}\in P(K)$. Then
$$\mathrm{Tor}^{\kk[m]}_{i,\,J}(\kk(K),\kk)\cong H_i\big( \Lambda^{*, J}[\mathbb{P}],d\big).$$
\end{theorem}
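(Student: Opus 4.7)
The plan is to realize $(\Lambda^*[\mathbb{P}],d)$ as the reduction modulo the irrelevant ideal of a Taylor-style free resolution of $\kk(K)$ built out of $\mathbb{P}$. For each $\tau\in\mathbb{P}$ set $v_\tau=\prod_{i\in\tau}v_i\in\kk[m]$, and more generally $v_S=\prod_{i\in S}v_i$ for $S\subseteq[m]$. Since $\tau\notin K$ for every $\tau\in\mathbb{P}$, each $v_\tau$ lies in $\mathcal{I}_K$; because $\mathbb{P}\supseteq MF(K)$ and $\{v_\sigma:\sigma\in MF(K)\}$ is the minimal monomial generating set of $\mathcal{I}_K$, the collection $\{v_\tau:\tau\in\mathbb{P}\}$ also generates $\mathcal{I}_K$ (possibly with redundancies). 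Using this generating set one forms the free $\kk[m]$-module $T_\bullet$ with $T_n$ freely generated by the degree-$n$ monomials $\uu=\tau_{i_1}\cdots\tau_{i_n}\in\Lambda^n[\mathbb{P}]$, equipped with differential
\[d_T(\uu)=\sum_{j=1}^{n}(-1)^{j+1}\,\frac{v_{S_{\uu}}}{v_{S_{\partial_j\uu}}}\;\partial_j\uu.\]
Augmented by $T_0=\kk[m]\twoheadrightarrow\kk(K)$, this is exactly the Taylor complex attached to the generating set $\{v_\tau\}_{\tau\in\mathbb{P}}$.

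The main obstacle is verifying that $T_\bullet\to\kk(K)$ is exact, that is, a $\kk[m]$-free resolution. One approach cites the classical fact that the Taylor complex associated with any (not necessarily minimal) monomial generating set of a monomial ideal is always a free resolution of the quotient ring. A hands-on proof proceeds by induction on $|\mathbb{P}|$ via a mapping-cone argument: removing a single element $\tau\in\mathbb{P}$ expresses $T_\bullet$ as the cone of a chain map between the Taylor complex on $\mathbb{P}\setminus\{\tau\}$ and a shifted copy of itself; when $v_\tau$ is redundant among the remaining generators this cone is chain-homotopy equivalent to the smaller Taylor complex, and when it is not, the cone relates two resolutions of ideals differing by $(v_\tau)$. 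In either case the requisite exactness propagates. This is the computational content of \cite[Theorem 2.6]{WZ13}.

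Granted exactness, the identification is immediate. Tensoring $T_\bullet$ over $\kk[m]$ with $\kk=\kk[m]/(v_1,\dots,v_m)$ kills every non-constant monomial. The coefficient $v_{S_{\uu}}/v_{S_{\partial_j\uu}}$ is the monomial $\prod_{i\in S_{\uu}\setminus S_{\partial_j\uu}}v_i$, which collapses to $1$ precisely when $S_{\uu}=S_{\partial_j\uu}$ (the condition defining $\delta_j=1$ in the paper's notation) and to $0$ otherwise. Thus $T_\bullet\otimes_{\kk[m]}\kk$ is isomorphic to $(\Lambda^*[\mathbb{P}],d)$ as chain complexes. The $\zz^m$-multigrade assigning multidegree $S_{\uu}\subseteq[m]$ to the basis element $\uu$ is preserved by both $d_T$ and the base change, matching the bigrade on $\Lambda^*[\mathbb{P}]$, so by definition of $\mathrm{Tor}$,
\[\mathrm{Tor}^{\kk[m]}_{i,\,J}(\kk(K),\kk)=H_i\bigl(T_\bullet\otimes_{\kk[m]}\kk\bigr)_J\cong H_i\bigl(\Lambda^{*,J}[\mathbb{P}],d\bigr).\]
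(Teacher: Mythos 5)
The paper offers no proof of this statement at all---it is imported wholesale from \cite{WZ13} (Theorems 2.6 and 3.2), whose method the introduction explicitly describes as the Taylor resolution of the Stanley--Reisner ring following Yuzvinsky. Your argument---building the Taylor complex on the (possibly redundant) squarefree generators $\{v_\tau\}_{\tau\in\mathbb{P}}$ of $\mathcal{I}_K$, invoking the classical fact that any Taylor complex on a monomial generating set resolves the quotient ring, and then reducing modulo $(v_1,\dots,v_m)$ so that the lcm-quotient coefficients collapse to the $\delta_j$ of the differential $d$---is correct and is precisely that approach.
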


\begin{remark}
From Theorem \ref{thm:1}, we know that the homology groups
$H_i\big( \Lambda^{*, J}[\mathbb{P}],d\big)$ of a simplicial complement $\mathbb{P}$ is not depend on the choice of $\mathbb{P}$. It just depend on the simplicial complex $K$.
\end{remark}
If we can prove that
$$H_i\big(\Lambda^{*,J}[\mathbb{P}],d\big)\cong\w{H}^{|J|-i-1}(K_J;\kk),$$
then the Hochster theorem is proved. The following theorem ensure these isomorphisms hold.
\begin{theorem}\label{thm:3}
Let $K$ be a simplicial complex on the set $[m]$, and let $\mathbb{P}$ be one of the simplicial compliments of $K$.
Then we have the following group isomorphisms:
$$H_i\big( \Lambda^{*,[m]}[\mathbb{P}],d\big)\cong\w{H}^{m-i-1}(K;\kk).$$

\end{theorem}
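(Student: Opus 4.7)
The plan is to recognize $\Lambda^{*,[m]}[\mathbb{P}]$ as a shifted relative simplicial chain complex, translate its homology to that of a concrete subspace of $\partial\Delta^{m-1}$ by a \v Cech/nerve argument, and then apply Alexander duality on $\partial\Delta^{m-1}\cong S^{m-2}$ to reach $\w{H}^{m-i-1}(K;\kk)$.

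First, a $\kk$-basis of $\Lambda^{n,[m]}[\mathbb{P}]$ is given by the $n$-element subsets $\{\tau_{i_1},\dots,\tau_{i_n}\}\subseteq\mathbb{P}$ whose union equals $[m]$. Let $\Delta_{\mathbb{P}}$ be the full abstract simplex on vertex set $\mathbb{P}$, and let $\mathcal{N}\subseteq\Delta_{\mathbb{P}}$ be the subcomplex of subsets $\{\tau_{i_1},\dots,\tau_{i_k}\}$ with $\tau_{i_1}\cup\cdots\cup\tau_{i_k}\subsetneq[m]$; this collection is downward-closed, hence really a subcomplex. After fixing a linear order on $\mathbb{P}$, I would verify that the map $\uu=\tau_{i_1}\cdots\tau_{i_n}\mapsto(\tau_{i_1},\dots,\tau_{i_n})$ gives an isomorphism $\Lambda^{n,[m]}[\mathbb{P}]\xrightarrow{\cong}C_{n-1}(\Delta_{\mathbb{P}},\mathcal{N})$, and that the factor $\delta_j$ in the differential is exactly the mechanism enforcing reduction mod $\mathcal{N}$; so $d$ becomes the relative simplicial boundary. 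Since $\Delta_{\mathbb{P}}$ is contractible, the long exact sequence of the pair immediately yields $H_n(\Lambda^{*,[m]}[\mathbb{P}],d)\cong\w{H}_{n-2}(\mathcal{N};\kk)$ for every $n$, with the convention $\w{H}_{-1}(\varnothing)=\kk$ handling the degenerate case $\mathcal{N}=\varnothing$ (which occurs when $\mathbb{P}=\{[m]\}$ or $\mathbb{P}=\varnothing$).

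Next, for each $\tau\in\mathbb{P}$ set $U_\tau:=\Delta_{[m]\setminus\tau}\subseteq\partial\Delta^{m-1}$. Any intersection $U_{\tau_{i_1}}\cap\cdots\cap U_{\tau_{i_k}}=\Delta_{[m]\setminus(\tau_{i_1}\cup\cdots\cup\tau_{i_k})}$ is a (possibly empty) simplex, hence contractible, and is non-empty exactly when $\tau_{i_1}\cup\cdots\cup\tau_{i_k}\ne[m]$. Consequently $\mathcal{N}$ is the nerve of the acyclic closed cover $\{U_\tau\}_{\tau\in\mathbb{P}}$ of $X:=\bigcup_\tau U_\tau$, and the \v Cech spectral sequence of this cover collapses (each finite intersection is contractible) giving $\w{H}_*(\mathcal{N};\kk)\cong\w{H}_*(X;\kk)$. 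A face $\sigma\subseteq[m]$ lies in $X$ iff some $\tau\in\mathbb{P}$ satisfies $\tau\subseteq[m]\setminus\sigma$; since $\mathbb{P}\supseteq MF(K)$ and every non-face of $K$ contains a minimal one, this is equivalent to $[m]\setminus\sigma\notin K$, i.e.\ $\sigma\in K^{\vee}$ (the Alexander dual). Hence $X=|K^{\vee}|\subseteq\partial\Delta^{m-1}\cong S^{m-2}$, and combinatorial Alexander duality gives $\w{H}_{n-2}(K^{\vee};\kk)\cong\w{H}^{m-n-1}(K;\kk)$. Composing the three isomorphisms finishes the proof.

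The main obstacle is the algebraic identification of $\Lambda^{*,[m]}[\mathbb{P}]$ with $C_{*-1}(\Delta_{\mathbb{P}},\mathcal{N})$: one has to check carefully that the sign $(-1)^{j+1}$, together with the factor $\delta_j$, reproduces exactly the relative simplicial boundary once a linear order is chosen on $\mathbb{P}$. A secondary technical point is that the \v Cech/nerve comparison must hold integrally, i.e.\ for $\kk=\mathbb{Z}$; this is routine because the cover is by contractible closed subcomplexes of a CW complex, but one may alternatively bypass the nerve theorem by an inductive Mayer--Vietoris argument on $|\mathbb{P}|$, which works directly over $\mathbb{Z}$. Everything else---the long exact sequence of the pair $(\Delta_{\mathbb{P}},\mathcal{N})$ and Alexander duality on $S^{m-2}$---is standard with $\mathbb{Z}$ coefficients.
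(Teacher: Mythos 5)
Your proposal is correct, and its overall architecture is the same as the paper's: first identify $H_n\big(\Lambda^{*,[m]}[\mathbb{P}],d\big)$ with $\w{H}_{n-2}$ of the complex $\{\sigma\subseteq\mathbb{P}\mid \bigcup_{\tau\in\sigma}\tau\neq[m]\}$, then realize that complex as the nerve of an acyclic cover, then apply Alexander duality in $S^{m-2}$. Your first step, via the pair $(\Delta_{\mathbb{P}},\mathcal{N})$ and contractibility of $\Delta_{\mathbb{P}}$, is literally the paper's short exact sequence $0\to\w{C}_*(\NN(\UU);\kk)\to\Lambda^{*+1}[\mathbb{P}]\to\Lambda^{*+1,[m]}[\mathbb{P}]\to0$ together with the acyclicity of $(\Lambda^*[\mathbb{P}],\partial)$, rewritten in relative-chain language; the $\delta_j$ factor is indeed exactly reduction modulo $\mathcal{N}$. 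Where you genuinely diverge is in the geometric step. The paper covers the \emph{open} complement $U(K)=|\partial\Delta^{m-1}|\setminus|K|$ by the open stars $U_i=\mathrm{int}\big|\mathrm{star}_{\partial\Delta^{m-1}}\tau_i\big|$, computes their intersections in Lemma~\ref{lem:1}, applies the \v Cech-theoretic nerve theorem for open covers (Theorem~\ref{thm:4}, quoted from Bredon, which handles $\kk=\mathbb{Z}$), and finishes with topological Alexander duality for $(S^{m-2},|K|)$. You instead cover the \emph{closed} subcomplex $\bigcup_{\tau\in\mathbb{P}}\Delta_{[m]\setminus\tau}$, identify it with the Alexander dual $K^{\vee}$ using $MF(K)\subseteq\mathbb{P}$, invoke a nerve theorem for closed covers by subcomplexes, and finish with combinatorial Alexander duality; the two pictures are reconciled by the standard fact that $|K^{\vee}|$ is a deformation retract of $U(K)$. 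Your route stays entirely simplicial, avoids the point-set discussion of open stars, and makes the appearance of $K^{\vee}$ explicit, at the cost of having to justify the closed nerve theorem integrally (your Mayer--Vietoris fallback covers this); the paper's route outsources that issue to a citation. Both are complete proofs.
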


\begin{corollary}
Let everything be as before. Then
$$H_i\big( \Lambda^{*, J}[\mathbb{P}],d\big)\cong\w{H}^{|J|-i-1}(K_J;\kk).$$
\end{corollary}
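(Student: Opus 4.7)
The plan is to deduce the corollary from Theorem~\ref{thm:3} by restricting everything to the vertex set $J$. Concretely, I would define the \emph{restricted simplicial complement}
\[
\mathbb{P}_J := \{\tau \in \mathbb{P} : \tau \subseteq J\}
\]
and view $K_J$ as a simplicial complex on the vertex set $J$ (of cardinality $|J|$). Then applying Theorem~\ref{thm:3} to the pair $(K_J,\mathbb{P}_J)$ on this smaller vertex set yields
\[
H_i\bigl(\Lambda^{*,J}[\mathbb{P}_J], d\bigr) \cong \widetilde{H}^{|J|-i-1}(K_J;\kk),
\]
so the corollary reduces to identifying $\Lambda^{*,J}[\mathbb{P}]$ with $\Lambda^{*,J}[\mathbb{P}_J]$ as chain complexes.

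The first step is to verify that $\mathbb{P}_J$ is indeed a simplicial complement of $K_J$, i.e.\ that $MF(K_J) \subseteq \mathbb{P}_J$. If $\tau$ is a missing face of $K_J$, then $\tau \subseteq J$, $\tau \notin K$ (otherwise $\tau \in K_J$), and every proper subset of $\tau$ lies in $K_J \subseteq K$; hence $\tau \in MF(K) \subseteq \mathbb{P}$, and $\tau \subseteq J$ gives $\tau \in \mathbb{P}_J$.

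The second step is the identification $\Lambda^{*,J}[\mathbb{P}] = \Lambda^{*,J}[\mathbb{P}_J]$. A monomial $\uu = \tau_{i_1}\cdots\tau_{i_n}$ in $\Lambda^*[\mathbb{P}]$ satisfies $S_{\uu} = J$ iff $S_\uu \subseteq J$ (equivalently each $\tau_{i_k} \subseteq J$, i.e.\ $\tau_{i_k} \in \mathbb{P}_J$) \emph{and} the union exhausts $J$. Hence the underlying modules coincide. The differential $d$ on either side is given by the same formula $d(\uu)=\sum_j (-1)^{j+1}\partial_j\uu\cdot\delta_j$, and the $\delta_j$-condition $S_\uu = S_{\partial_j\uu}$ depends only on $\uu$ itself (not on the ambient complement), so the two differentials agree.

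The plan is essentially notational bookkeeping, and there is no serious obstacle; the only point to check carefully is that one is allowed to regard $K_J$ as a complex on the vertex set $J$ of size $|J|$ when invoking Theorem~\ref{thm:3}, which is harmless because the statement of Theorem~\ref{thm:3} refers only to the vertex set on which the simplicial complex is defined and does not use the specific labels $1,\dots,m$.
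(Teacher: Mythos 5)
Your proposal is correct and is exactly the reduction the paper intends: the corollary is stated without proof precisely because it follows from Theorem~\ref{thm:3} applied to $K_J$ on the vertex set $J$ with the restricted complement $\mathbb{P}_J$, together with the observation that $\Lambda^{*,J}[\mathbb{P}]=\Lambda^{*,J}[\mathbb{P}_J]$ as chain complexes. Your two verifications (that $MF(K_J)\subseteq\mathbb{P}_J$ and that the differentials agree) are the right points to check, and both are handled correctly.
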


\section{Combinatorial description of homology groups of simplicial complements}

If $K$ is a simplex, the theorem is trivial.
So in this paper,
we assume that $K$ is a simplicial complex on the set $[m]$ but not a simplex.
Given a simplicial complement $\mathbb{P}$ of $K$, suppose
$$\mathbb{P}=\{\tau_1,\tau_2,\ldots,\tau_r\}.$$
For any $\tau_i\in \mathbb{P}$,
we have a simplicial complex \[\mathrm{star}_{\partial\Delta^{m-1}}{\tau_i}:=\{\tau\in\partial\Delta^{m-1}~|~\tau\cup\tau_i\in\partial\Delta^{m-1}\}.\]
Clearly $\mathrm{star}_{\partial\Delta^{m-1}}{\tau_i}$ is a triangulation of $D^{m-2}$.
We denote by $U_i=\mathrm{int}\big|\mathrm{star}_{\partial\Delta^{m-1}}{\tau_i}\big|$,
the interior of the geometric realization of $\mathrm{star}_{\partial\Delta^{m-1}}{\tau_i}$.
\begin{proposition}
Let $U(K):=|\partial\Delta^{m-1}|\setminus|K|$. $\mathbb{P}$ and $U_i$ are defined as above. Then $\UU=\big\{U_i\big\}_{i=1,2,\ldots,r}$
is a open cover of $U(K)$.
\end{proposition}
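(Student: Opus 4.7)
The plan is to first give a barycentric-coordinate description of $U_i$ and then verify the two inclusions $U_i\subseteq U(K)$ and $U(K)\subseteq\bigcup_i U_i$ using the combinatorics of carriers. Writing $\sigma_i:=[m]\setminus\tau_i$, the simplicial star $\mathrm{star}_{\partial\Delta^{m-1}}\tau_i$ consists of those simplices $\tau$ with $\sigma_i\not\subseteq\tau$, so its realization is the complement in $|\partial\Delta^{m-1}|$ of the open geometric star of $\sigma_i$. Taking the topological interior (i.e.\ the complement of the closure of that open star) identifies
\[
U_i=\{x\in|\partial\Delta^{m-1}|\mid \tau_i\subseteq\mathrm{carrier}(x)\}=\bigcap_{j\in\tau_i}\{x\mid x_j>0\},
\]
where $\mathrm{carrier}(x):=\{j\in[m]\mid x_j>0\}$ denotes the support of the barycentric coordinates of $x$. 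In particular each $U_i$ is open in $|\partial\Delta^{m-1}|$.

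With this description the two inclusions become immediate. If $x\in U_i$ then $\tau_i\subseteq\mathrm{carrier}(x)$, and since $\tau_i\notin K$ and a simplicial complex is closed under taking subsets, $\mathrm{carrier}(x)\notin K$, so $x\notin|K|$; this shows $U_i\subseteq U(K)$. Conversely, if $x\in U(K)$ then $\mathrm{carrier}(x)\notin K$, so $\mathrm{carrier}(x)$ contains some minimal non-face of $K$, which by definition belongs to $MF(K)\subseteq\mathbb{P}$; calling this minimal non-face $\tau_j$ gives $\tau_j\subseteq\mathrm{carrier}(x)$, hence $x\in U_j$.

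The main obstacle I expect is the first step: one has to be careful to distinguish the simplicial star (a subcomplex, whose realization is closed in $|\partial\Delta^{m-1}|$) from its topological interior $U_i$, and to translate both into carrier language. After that, the cover property reduces to two elementary observations — every non-face of a simplicial complex contains a missing face, and the defining hypothesis $MF(K)\subseteq\mathbb{P}$ — neither of which requires any serious argument.
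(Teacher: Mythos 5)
Your proof is correct and follows essentially the same route as the paper's: both identify a point of $U(K)$ by its carrier (the non-face in whose relative interior it lies) and use the fact that every non-face contains a missing face, hence an element of $\mathbb{P}$. You additionally spell out the barycentric-coordinate description of $U_i$ and verify the inclusion $U_i\subseteq U(K)$, which the paper leaves implicit.
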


\begin{proof}
If $x\in|\partial\Delta^{m-1}|\setminus|K|$, there is a simplex $\tau\in 2^{[m]}\setminus K$ satisfying $x$ in the relative interior of $\tau$.
From the definition of a simplicial complement, there exists a simplex $\tau_i\in\mathbb{P}$, such that $\tau_i\in\tau$. Thus, $x\in U_i$.
\end{proof}

\begin{definition}
For any topological space $X$,
Let $\UU=\big\{U_i\big\}_{i\in I}$ be an open cover of the space $X$ indexed by a set $I$.
We define the \emph{nerve} $\NN(\UU)$ to be the abstract simplicial complex on the set $I$
\[
\NN(\UU)=\big\{(i_1, i_2\ldots, i_n)\subseteq I~|~U_{i_1}\cap U_{i_2}\cap\ldots\cap U_{i_n}\neq{\varnothing}\big\}.
\]
For a simplex $\sigma=(i_1,i_2,\dots,i_n)\in \NN(\UU)$, denote by $U_{\sigma}=U_{i_1}\cap U_{i_2}\cap\ldots\cap U_{i_n}$.

The simplicial homology groups of $\NN(\UU)$ are called the \emph{homology groups of the open cover $\UU$}, and denoted by
\[\check{H}_*(X,\UU;\kk):=H_*(\NN(\UU);\kk).\]

\end{definition}

The following theorem is a canonical result of \emph{\v Cech homology} theory which will be used in the sequel.

\begin{theorem}[see {\cite[Corollary 13.3]{Bredon}}]\label{thm:4}
Let $\UU=\big\{U_i\big\}_{i\in I}$ be an open cover of the space $X$ having the property that $\w H_*(U_{\sigma})=0$ for all $\sigma\in \NN(\UU)$. Then there is a canonical isomorphism
\[H_*(X;\kk)\cong \check{H}_*(X,\UU;\kk).\]
\end{theorem}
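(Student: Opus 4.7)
The plan is to introduce a Mayer--Vietoris--type double complex and compute its total homology in two different ways via spectral sequences, obtaining the two sides of the asserted isomorphism as their respective abutments.

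I would first set
$$C_{p,q}=\bigoplus_{\substack{\sigma\in \NN(\UU)\\ |\sigma|=p+1}} S_q(U_\sigma;\kk),$$
where $S_q(Y;\kk)$ denotes singular $q$-chains. The vertical differential is the singular boundary on each summand; the horizontal differential $\delta\colon C_{p,q}\to C_{p-1,q}$ is the Čech-style alternating sum of the restriction maps $S_q(U_\sigma)\to S_q(U_{\partial_j\sigma})$ induced by the inclusions $U_\sigma\hookrightarrow U_{\partial_j\sigma}$. Running the spectral sequence that takes vertical homology first, singular homology commutes with direct sums so $E_1^{p,q}=\bigoplus_{|\sigma|=p+1}H_q(U_\sigma;\kk)$, and the hypothesis $\w H_*(U_\sigma;\kk)=0$ forces this to vanish for $q>0$. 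In the bottom row each summand $H_0(U_\sigma;\kk)$ is canonically $\kk$ (the $U_\sigma$ are nonempty by membership in $\NN(\UU)$ and connected by the reduced-acyclicity hypothesis), and the Čech differential matches the simplicial boundary of $\NN(\UU)$. Hence this sequence collapses at $E_2$ to $H_p(\NN(\UU);\kk)=\check H_p(X,\UU;\kk)$.

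Running the other spectral sequence, I would take horizontal homology first. For each fixed $q$, the row together with the augmentation $\varepsilon\colon\bigoplus_{i\in I}S_q(U_i;\kk)\to S_q^\UU(X;\kk)$ landing in $\UU$-small singular chains forms the augmented Čech complex
$$\cdots\to\bigoplus_{|\sigma|=2}S_q(U_\sigma;\kk)\to\bigoplus_{i\in I}S_q(U_i;\kk)\xrightarrow{\ \varepsilon\ }S_q^\UU(X;\kk)\to 0.$$
A partition-of-unity argument on paracompact $X$ produces an explicit contracting chain homotopy, so this row is exact. Therefore the $E_1$-page is concentrated in column $p=0$ at $S_q^\UU(X;\kk)$, and the small-simplices theorem $H_*(S_*^\UU(X;\kk))\cong H_*(X;\kk)$ then yields $E_2^{0,q}=H_q(X;\kk)$, so this spectral sequence converges to $H_*(X;\kk)$.

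Both spectral sequences converge to the homology of the total complex of $C_{*,*}$, so comparing their abutments gives a canonical isomorphism $H_*(X;\kk)\cong \check H_*(X,\UU;\kk)$ realised through the two edge homomorphisms. The main obstacle is the exactness of the augmented row in the second spectral sequence: this is where paracompactness of $X$ and the barycentric subdivision from the small-simplices theorem enter, and it is essentially the content of the classical Leray nerve lemma. Once that exactness is in hand, the rest of the argument is a purely formal spectral-sequence comparison.
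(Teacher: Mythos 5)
First, note that the paper itself offers no proof of this statement: it is imported verbatim from Bredon as a known result of \v{C}ech theory, so you are supplying an argument where the authors supply only a citation. Your architecture is the standard proof of the nerve theorem for singular homology and is essentially sound. The double complex $C_{p,q}=\bigoplus_{|\sigma|=p+1}S_q(U_\sigma;\kk)$ is first-quadrant, so both spectral sequences converge to the homology of the total complex; the first one degenerates onto the ordered simplicial chain complex of $\NN(\UU)$ because $\w H_*(U_\sigma)=0$ kills everything above row $q=0$ and forces each $U_\sigma$ to be nonempty and path-connected, so $H_0(U_\sigma;\kk)\cong\kk$ canonically; the second degenerates onto $S^{\UU}_*(X;\kk)$ in column $p=0$, and the small-simplices theorem identifies its homology with $H_*(X;\kk)$. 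Comparing edge homomorphisms gives the canonical isomorphism.

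The one step you justify incorrectly is the exactness of the augmented rows. A partition of unity cannot produce the contracting homotopy in this setting: partitions of unity let you multiply differential forms, or sections of a fine sheaf, by bump functions, but a singular chain cannot be multiplied by a continuous function, so that mechanism belongs to the \v{C}ech--de Rham (cohomological) version of this argument, not to the homological one. The correct argument is both simpler and more general, and requires no paracompactness at all: every $\UU$-small singular simplex $T$ has image contained in some member of the cover, so fix once and for all an index $i(T)$ with $\mathrm{im}(T)\subseteq U_{i(T)}$, and define $D$ on the summand indexed by $\sigma$ by sending $T\in S_q(U_\sigma)$ to $\pm\,T$ viewed in the summand indexed by $\{i(T)\}\cup\sigma$ (zero, under the alternating convention, when $i(T)\in\sigma$). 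Extended linearly, $D$ satisfies $\delta D+D\delta=\mathrm{id}$ on the augmented row, which is therefore exact. With that substitution --- and with the remark that barycentric subdivision enters only through the small-simplices theorem, not through the row exactness --- your proof is complete and, unlike the paper, self-contained.
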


\begin{theorem}\label{thm:2}
Let $K$ be a simplicial complex on $[m]$,
$\mathbb{P}=\{\tau_1,\tau_2,\ldots,\tau_r\}$ be a simplicial complement of $K$.
By Proposition 2.1,
$\UU=\big\{U_i\big\}_{i=1,2,\ldots,r}$ forms an open cover of the topological space
$U(K)$.
Then  we have the following isomorphisms:
\[
H_n\big( \Lambda^{*,[m]}[\mathbb{P}],d\big)\cong\widetilde{\check{H}}_{n-2}(U(K),\UU;\kk):=\widetilde{H}_{n-2}(\NN(\UU);\kk).
\]
\end{theorem}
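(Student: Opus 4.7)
The plan is to identify $(\Lambda^{*,[m]}[\mathbb{P}],d)$ up to a shift of degree by one with the \emph{relative} simplicial chain complex of the pair $(\Delta^{r-1},\NN(\UU))$, where $\Delta^{r-1}$ is the full simplex on the index set $\{1,\dots,r\}$ of $\mathbb{P}$, and then to read off the claim from the long exact sequence of this pair. Throughout I shall use the bijection $\tau_{i_1}\cdots\tau_{i_n}\leftrightarrow\{i_1,\dots,i_n\}$ (with $i_1<\cdots<i_n$) between monomials in $\Lambda^*[\mathbb{P}]$ and simplices of $\Delta^{r-1}$.

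The first step is a combinatorial description of $\NN(\UU)$. For $x\in|\partial\Delta^{m-1}|$ let $\sigma_x$ be the unique simplex whose relative interior contains $x$; unwinding the definition of $U_i$, one checks $U_i=\{x:\tau_i\subseteq\sigma_x\}$, i.e.\ $U_i$ is the open star of $\tau_i$ in $|\partial\Delta^{m-1}|$. Consequently $U_{i_1}\cap\cdots\cap U_{i_n}\neq\varnothing$ iff $\tau_{i_1}\cup\cdots\cup\tau_{i_n}\subsetneq[m]$. Under the bijection above, $\NN(\UU)$ is therefore the subcomplex of $\Delta^{r-1}$ whose simplices are those with total set $S_{\uu}\subsetneq[m]$, while the generators of $\Lambda^{n,[m]}[\mathbb{P}]$ correspond exactly to the complementary $(n-1)$-simplices (those with $S_{\uu}=[m]$). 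This yields a $\kk$-module isomorphism $\Lambda^{n,[m]}[\mathbb{P}]\cong C_{n-1}(\Delta^{r-1},\NN(\UU))$.

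Second I would match the differentials. The simplicial boundary of $\{i_1,\dots,i_n\}$ in $\Delta^{r-1}$ is $\sum_{j=1}^n(-1)^{j+1}\{i_1,\dots,\widehat{i_j},\dots,i_n\}$, corresponding term-by-term to $\sum_{j=1}^n(-1)^{j+1}\partial_j\uu$. Passing to the relative complex kills exactly those faces that lie in $\NN(\UU)$, namely the $\partial_j\uu$ with $S_{\partial_j\uu}\subsetneq[m]$, and retains only those with $S_{\partial_j\uu}=[m]=S_{\uu}$. But this retention rule is precisely the decoration $\delta_j=1$ appearing in the definition of $d$; hence the chain-level isomorphism above intertwines $d$ with the relative simplicial boundary.

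Finally, since $\Delta^{r-1}$ is a simplex and so $\widetilde H_*(\Delta^{r-1};\kk)=0$, the long exact sequence in reduced homology of the pair $(\Delta^{r-1},\NN(\UU))$ collapses to the connecting isomorphism $H_{n-1}(\Delta^{r-1},\NN(\UU);\kk)\cong\widetilde H_{n-2}(\NN(\UU);\kk)$, and composing with the chain isomorphism of the previous steps finishes the proof. I expect the main difficulty to be step one, namely the geometric identification of $U_i$ with an open star; in particular one must verify that the degenerate case $\tau_i=[m]$ (for which $U_i=\varnothing$ and the vertex $i$ is absent from $\NN(\UU)$) does not spoil the identification of $\Lambda^{*,[m]}[\mathbb{P}]$ with the relative chains, the point being that any monomial involving such a $\tau_i$ automatically has $S_{\uu}=[m]$ and so is still a generator on the $\Lambda$ side. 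Once the open-star identification is secured, the remaining steps are essentially bookkeeping.
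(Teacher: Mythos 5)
Your proposal is correct and is essentially the paper's own argument in different packaging: the paper's acyclic complex $\big(\Lambda^{*+1}[\mathbb{P}],\partial\big)$ is exactly the augmented chain complex of your $\Delta^{r-1}$, its short exact sequence with subcomplex $\widetilde{C}_*(\NN(\UU);\kk)$ and quotient $\big(\Lambda^{*+1,[m]}[\mathbb{P}],d\big)$ is the sequence of the pair $(\Delta^{r-1},\NN(\UU))$, and the connecting isomorphism you invoke is the same one the paper extracts from the induced long exact sequence. Your identification of the nerve via open stars and your handling of the $\delta_j$ decoration as the relative boundary both match Lemma 2.5 and the paper's chain isomorphism, so nothing is missing.
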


Before proving Theorem \ref{thm:2}, we work on the following lemma first.

\begin{lemma}\label{lem:1}
We use notations as above. Then
\begin{enumerate}[(1)]
\item If $\tau_i\cup\tau_j\neq[m]$, then
$U_i\cap U_j=\mathrm{int}\big|\mathrm{star}_{\partial\Delta^{m-1}}\tau_i\cup\tau_j\big|.$

\item If $\tau_i\cup\tau_j=[m]$, then $U_i\cap U_j=\varnothing$.
\end{enumerate}
\end{lemma}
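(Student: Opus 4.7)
The plan is to derive both parts from a single intrinsic description of $U_i$ in terms of barycentric coordinates. For $x\in|\partial\Delta^{m-1}|$, let $\mathrm{supp}(x)\subsetneq[m]$ denote the unique face whose relative interior contains $x$. The key claim I would establish is
\[
U_i=\{x\in|\partial\Delta^{m-1}|\mid\tau_i\subseteq\mathrm{supp}(x)\}.
\]

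To verify this I would invoke the standard fact that for a subcomplex $L$ of a finite simplicial complex $M$, a point $x\in|M|$ lies in $\mathrm{int}|L|\subseteq|M|$ if and only if every simplex $\sigma'\in M$ containing $\mathrm{supp}(x)$ as a face belongs to $L$ (one direction uses the open star of $\mathrm{supp}(x)$ as a neighborhood of $x$; the other uses that for any $\sigma'\supseteq\mathrm{supp}(x)$, the line segment from an interior point of $\sigma'$ to $x$ stays in $\mathrm{int}(\sigma')$ until its endpoint, so $\mathrm{int}(\sigma')$ meets every neighborhood of $x$). Applied to $L=\mathrm{star}_{\partial\Delta^{m-1}}\tau_i$ inside $M=\partial\Delta^{m-1}$, the condition $\sigma'\cup\tau_i\subsetneq[m]$ only needs to be checked on maximal $\sigma'\supseteq\mathrm{supp}(x)$, namely the facets $[m]\setminus\{k\}$ with $k\notin\mathrm{supp}(x)$. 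Since $([m]\setminus\{k\})\cup\tau_i\neq[m]$ iff $k\notin\tau_i$, the combined condition collapses to $\tau_i\cap([m]\setminus\mathrm{supp}(x))=\varnothing$, i.e.\ $\tau_i\subseteq\mathrm{supp}(x)$, proving the claim.

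Granted the claim, both parts are immediate. For (2), if $\tau_i\cup\tau_j=[m]$, then any $x\in U_i\cap U_j$ would force $[m]=\tau_i\cup\tau_j\subseteq\mathrm{supp}(x)\subsetneq[m]$, which is absurd, so $U_i\cap U_j=\varnothing$. For (1), the claim yields
\[
U_i\cap U_j=\{x\mid\tau_i\cup\tau_j\subseteq\mathrm{supp}(x)\},
\]
and since $\tau_i\cup\tau_j\subsetneq[m]$, applying the claim a second time with $\tau_i\cup\tau_j$ in place of $\tau_i$ identifies this set with $\mathrm{int}|\mathrm{star}_{\partial\Delta^{m-1}}(\tau_i\cup\tau_j)|$.

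The only genuinely delicate step is the interior characterization; once that is in hand, the lemma is purely a bookkeeping exercise about unions of subsets of $[m]$.
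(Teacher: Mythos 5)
Your proof is correct and takes essentially the same route as the paper: your key claim $U_i=\{x\mid\tau_i\subseteq\mathrm{supp}(x)\}$ is exactly the paper's identity $\mathrm{int}\big|\mathrm{star}_{\partial\Delta^{m-1}}\tau\big|=\bigcup_{\tau\subseteq\sigma}\overset{\circ}{|\sigma|}$ rephrased in terms of supports, and both arguments then finish by a direct set-theoretic check on unions of subsets of $[m]$. The only difference is that you supply the justification of this identity (via the open-star/segment argument), which the paper leaves as ``direct checking.''
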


\begin{proof}
Note that for any simplex $\tau$ of $\partial\Delta^{m-1}$,
\[\mathrm{int}\big|\mathrm{star}_{\partial\Delta^{m-1}}\tau\big|=\bigcup\limits_{\tau\subseteq\sigma}\overset{\circ}{|\sigma|},\]
where $\overset{\circ}{|\sigma|}$ is the relative interior of the geometric realization of $\sigma$. Then the lemma follows by a direct checking.
\end{proof}

\begin{proof}[Proof of Theorem \ref{thm:2}]

From Definition 2.2,
we know that
$$\NN(\UU)=\big\{(i_1, i_2\ldots, i_n)\in 2^{[r]}\mid U_{i_1}\cap U_{i_2}\cap\ldots\cap U_{i_n}\neq{\varnothing}\big\}.$$
Lemma \ref{lem:1} shows that
$U_{i_1}\cap U_{i_2}\cap\ldots\cap U_{i_n}\neq{\varnothing}$ if and onely if
$\tau_{i_1}\cup\tau_{i_2}\ldots\cup\tau_{i_n}\neq [m]$. So the nerve can be written as
$$\NN(\UU)=\big\{(i_1, i_2\ldots, i_n)\in 2^{[r]}\mid\tau_{i_1}\cup\tau_{i_2}\ldots\cup\tau_{i_n}\neq [m]\big\}.$$

Let $\Lambda^{*}[\mathbb{P}]$ be the exterior algebra generated by
$\mathbb{P}=\{\tau_1,\tau_2,\ldots,\tau_r\}$. We define another differential $\partial:\Lambda^{n}[\mathbb{P}]\rightarrow \Lambda^{n-1}[P_0]$ generated by
$$\partial(\uu)=\sum_{j=1}^n(-1)^{j+1}\partial_j\uu,$$ where $\partial_j\uu=\tau_{i_1}\cdots{\widehat{\tau}_{i_j}}\cdots\tau_{i_n}$ for any monomial $\uu=\tau_{i_1}\tau_{i_2}\cdots\tau_{i_n}$. Define a homomorphism
$$\Phi:\widetilde{C}_*\big(\NN(\UU);\kk\big)\longrightarrow\Lambda^{*+1}[\mathbb{P}],$$
generated by
$\Phi\big((i_1, i_2,\ldots, i_n)\big):=\tau_{i_1}\tau_{i_2}\cdots\tau_{i_n}\in\Lambda^{n}[\mathbb{P}]$,
where $\widetilde{C}_*\big(\NN(\UU);\kk\big)$ is the reduced simplicial chain complex of $\NN(\UU)$.
Obviously, $\Phi$ is a monomorphism. Let $\Gamma=\mathrm{Im}\Phi$. Then we get a short exact sequence of chain complexes,
$$0\rightarrow\big(\widetilde{C}_*(\NN(\UU);\kk\big),\,\partial)\rightarrow\big(\Lambda^{*+1}[\mathbb{P}],\,\partial\big)
\rightarrow\big(\Lambda^{*+1}[\mathbb{P}]/\Gamma,\,\partial)\rightarrow 0.$$
Apparently, $\Lambda^{*+1}[\mathbb{P}]/\Gamma$ is generated by all monomials $\uu\in \Lambda^{*+1,[m]}[\mathbb{P}]$ (i.e., $S_\uu=[m]$).

It is easy to see that there is a chain isomorphism
$$\big(\Lambda^{*+1}[\mathbb{P}]/\Gamma,\,\partial \big)\cong\big(\Lambda^{*+1,[m]}[\mathbb{P}],\,d\big),$$
where $\big(\Lambda^{*+1,[m]}[\mathbb{P}],\,d\big)$ is as in Theorem \ref{thm:1}.

It is easy to see that $H_*\big(\Lambda^{*}[\mathbb{P}],\,\partial \big)=0$. Thus from the long exact sequence induced by the short exact sequence above, we get that
$$H_n\big( \Lambda^{*,[m]}[\mathbb{P}],\,d\big)\cong\w{H}_{n-2}(\NN(\UU);\kk).$$
\end{proof}

\begin{proof}[Proof of Theorem \ref{thm:3}]
Note first that $\partial\Delta^{m-1}\cong S^{m-2}$, and $\big|\mathrm{star}_{\partial\Delta^{m-1}}\tau\big|\cong D^{m-2}$ for each $\tau\in\partial\Delta^{m-1}$, $\tau\neq\varnothing$.
We combine the results of Theorem \ref{thm:4}, Theorem \ref{thm:2} and Lemma \ref{lem:1} to get that
$$H_n\big( \Lambda^{*,[m]}[\mathbb{P}],\,d\big)\cong\widetilde H_{n-2}(U(K);\kk).$$
From Alexander duality theorem, we have
\[\widetilde H_{n-2}(U(K);\kk)\cong \widetilde H^{m-n-1}(K;\kk).\]
\end{proof}

\end{document}